\newcommand{\RR}{\mathbb{R}}
\newcommand{\tr}{^{\intercal}}
\newcommand{\hspp}{\hspace{8mm}}
\newcommand{\minimize}{\mbox{minimize\hspace{4mm} }}
\newcommand{\subto}{\mbox{subject to\hspace{4mm}}}
\newcounter{commentcounter}
\long\def\symbolfootnote[#1]#2{\begingroup%
\def\thefootnote{\fnsymbol{footnote}}\footnote[#1]{#2}\endgroup}
\newcommand{\lemmanum}[2]{\vspace{3mm} \noindent {\sc Lemma #1}{\it
    #2} \vspace{3mm}}
\newcommand{\half}{\mbox{\textonehalf}}
\title{An Alternative Globalization Strategy for Unconstrained Optimization}
\keywords{globalization strategy; unconstrained optimization; parallel
  implementation}
\begin{document}

\maketitle
\begin{abstract}
  We propose a new globalization strategy that can be used in
  unconstrained optimization algorithms to support rapid convergence
  from remote starting points. Our approach is based on using multiple
  points at each iteration to build a representative model of the
  objective function. Using the new information gathered from those
  multiple points, a local step is gradually improved by updating its
  direction as well as its length. We give a global convergence result
  and also provide parallel implementation details accompanied with a
  numerical study. Our numerical study shows that the proposed
  algorithm is a promising alternative as a globalization strategy.
\end{abstract}

\section{Introduction.} 
\label{sec:intro}

In unconstrained optimization, frequently used algorithms, like
quasi-Newton or trust-region methods, need to involve mechanisms that
ensure convergence to local solutions from remote starting
points. Roughly speaking, these \emph{globalization strategies}
guarantee that the improvement obtained by the algorithm is comparable
to the improvement obtained with a gradient step
\cite{Nocedal:2006}. 

In this paper we present a new globalization strategy that can be used
in unconstrained optimization methods for solving problems of the form
\begin{equation}
\label{eq:origprob}
 \min_{x\in\Re^n} \ \ f(x),
\end{equation}
where $f$ is a first order differentiable function. Conventional
methods for solving this problem mostly use local approximations that
are very powerful once the algorithm arrives at the close proximity of
a stationary point. The main idea behind the proposed strategy is
based on using additional information collected from multiple points
to construct an adaptive approximation of the function $f$ in
\eqref{eq:origprob}. In particular, we update our approximate model
constructed around the current iterate and a sequence of trial
points. Our objective is to come up with a better local model than the
one obtained by the current iterate only. We observe that the
collection of this additional information has a profound effect on the
performance of the method as well. Before moving to the next iterate,
the step is improved by updating its direction as well as its length
simultaneously. This step computation involves only the inner products
of vectors. Therefore, each iteration of the algorithm is amenable to
a parallel implementation. Though acquiring the additional information
from multiple points may add an extra burden on the algorithm, this
burden can be alleviated by using the readily available parallel
processors. Furthermore, our numerical experiments demonstrate that
the additional computations at each step may reduce the total number
of iterations, since we learn more about the function structure.

Parallel execution of linear algebra operations is common in the
parallel optimization literature; generally in designing parallel
implementations of existing methods.  Among the earliest work is
\cite{Byrd:1988}, where parallelization of linear algebra steps and
function evaluations in the BFGS method is discussed.  Around the same
time, \cite{Nash:1989} propose to use truncated-Newton methods
combined with computation of the search direction via block iterative
methods. In particular, they focus on the block conjugate-direction
method. By using block iterative methods, they parallelize some steps
of the algorithm, in particular, the linear algebra operations.  For
extensive reviews of the topic, we refer to \cite{Zenios:1989,
  Migdalas:2003, Dennis:2003}.

The use of multiple trial points or directions is also a recurring
idea in the field of parallel nonlinear optimization.  However, unlike
our case, these mostly depend on concurrent information collection /
generation.  \cite{Patel:1984} proposes an algorithm, which is based
on evaluating multiple points in parallel to determine the next
iterate. In the parallel line search implementation of
\cite{Phua:1998}, multiple step sizes are tried in parallel.
\cite{Laarhoven:1985} discusses three parallel quasi-Newton algorithms
that are also based on considering multiple directions to extend the
rank-one updates. To solve the original problem by solving a series of
independent subproblems, \cite{Han:1986} suggests using conjugate
subspaces for quasi-Newton updates. Thus, the resulting method can be
applied in a parallel computing environment.

The rest of this paper is organized as follows.  In Section
\ref{sec:algo}, we discuss the derivation and the theoretical
properties of the proposed strategy.  In Section \ref{sec:perf}, the
success of the main idea of the method is numerically tested, and its
parallel implementation is discussed in Section \ref{sec:imp}.
Conlusion of the study is presented in Section \ref{sec:conc}.

\section{Proposed Globalization Strategy.} 
\label{sec:algo}

At the core of the proposed strategy lies an \emph{extended} model
function, which is constructed by using two linear models at two
reference points. The first of these points is the current iterate and
the second one is a trial point. When one of these trials leads to a
successful step, then the algorithm moves to a new iterate. To make
our discussion more concrete, we start by describing an iteration of
the algorithm.

Let $x_k$ and $s_k$ denote the current iterate and the $k^{th}$ step
of the algorithm, respectively. Then, the next iterate becomes
$x_{k+1} = x_k + s_k$. These are the outer iterations. The vector
$s_k$ is obtained by executing a series of inner iterations and
computing trial steps $s_k^t$, $t=0, 1, \cdots$. We can further
simplify our exposition by defining
\[
\begin{array}{lll}
  f_k:=f(x_k), & g_k:=\nabla f(x_k), & x_k^t := x_k + s^t_k, \\[2mm]
  f^t_k := f(x_k^t), &  g^t_k := \nabla f(x_k^t), & y^t_k := g^t_k - g_k.
\end{array}
\]
The trial step $s_k^t$ is accepted as new $s_k$, if it provides
sufficient decrease in the sense
\begin{equation}
\label{eq:accunc2}
f_k^t - f_k \leq \rho g_k\tr s_k^t \mbox{ for some } \rho\in(0,1).
\end{equation}
This is nothing but the well-known Armijo condition
\cite{Nocedal:2006}.  

Up to this point, the algorithm behaves like a typical unconstrained
minimization procedure. However, to come up with the proposed
globalization strategy, we design a new subprocedure for computing
the trial steps, $s_k^t$. The inner step computation is done by
solving a subproblem that consists of an extended model function
updated at every inner iteration and a constraint restricting the
length of the steps. The extended model function is an approximation
to the objective function, $f$. It is constructed by using the
information gathered around the region bordered by $x_k$ and $x^t_k$.
Actually, the extended model function is a combination of the linear
models of $f$ at $x_k$ and $x^t_k$. That is,
\[
m_k^{t}(s) := \alpha_k^0(s)l_k^0(s) + \alpha_k^t(s)l_k^t(s-s^t_k),
\]
where
\begin{equation}
  l_k^0(s) := f_k + g_k\tr s ~ \mbox{ and } ~ l_k^t(s-s^t_k) := f^t_k + (g^t_k)\tr (s-s^t_k).  
\label{linfunc}
\end{equation}
This construction is illustrated in Figure~\ref{fig:derivation}.  Note
that both weights, $\alpha_k^0(s)$ and $\alpha_k^t(s)$ are functions
of $s$.  We want to make sure that if $s$ is closer to $s_k^t$, then
the weight $\alpha_k^t(s)$ of the linear model around $x_k$
increases. Similarly, $\alpha_k^t(0)$ should increase as $(s-s_k^t)$
gets closer to $(-s_k^t)$.  This is achieved by measuring the length
of the projections of $(s-s_k^t)$ and $s$ on $s_k^t$ by
\begin{equation}
\label{weights}
\alpha_k^0(s) = \frac{(s-s^t_k)\tr (-s^t_k)}{(-s^t_k)\tr (-s^t_k)} ~
\mbox{ and } ~ \alpha_k^t(s) = \frac{s\tr s^t_k}{(s^t_k)\tr s^t_k},
\end{equation}
respectively. 

 \begin{figure}
 \centering
 \includegraphics{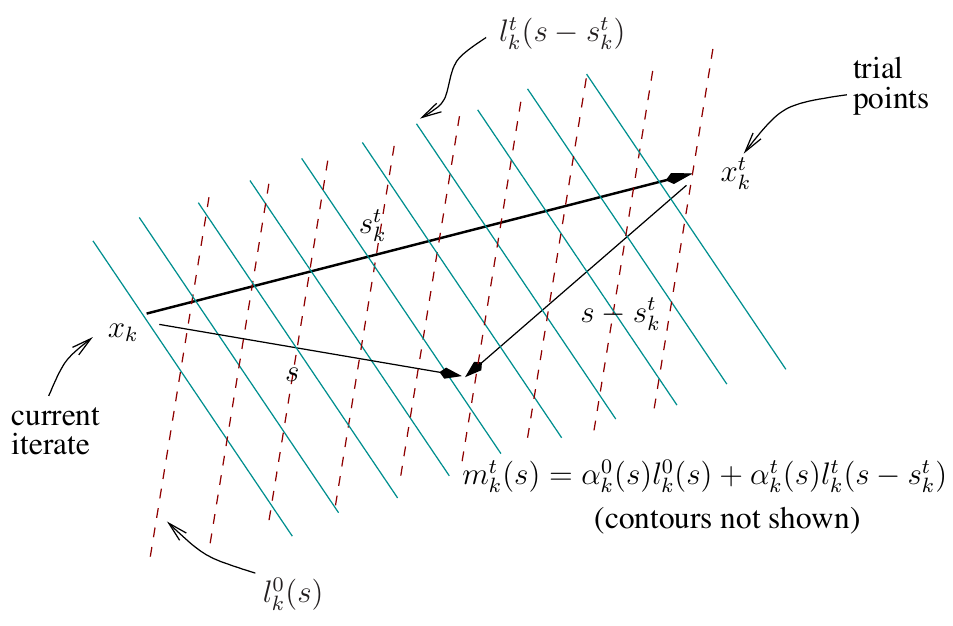}
 \caption{Construction of the model function $m_k^{t}(s)$ around
   $x_k$ and $x_k^t$}
 \label{fig:derivation}
 \end{figure}

Next, we introduce a constraint to handle the possibly hard quadratic
function $m_k^{t}$ as well as control the step length. Recall
that our model function is constructed around the previous trial step
$s_k^t$ by considering the area lying between $x_k$ and $x_k+s_k^t$.
Thus, we impose the constraint
\begin{equation}
\label{const}
 \|s\|^2 + \|s-s^t_k\|^2 \leq \|s^t_k\|^2.
\end{equation}
Clearly, this constraint controls not only the length of $s$ but also
its deviation from the previous trial point $s_k^t$.  In addition,
this choice of the constraint ensures that both weight functions
$\alpha_k^0(s)$ and $\alpha_k^t(s)$ are in $[0,1]$ and they add up to
1. Hence, our model function $m_k^{t}$ is the convex combination of
the linear functions $l_k^0$ and $l_k^t$. This observation is formally
presented in Lemma \ref{lem:tr} and its proof is given in Appendix
\ref{app:proofs}.
\begin{lemma}
\label{lem:tr}
If constraint \eqref{const} holds at inner iteration $t+1$ of
iteration $k$, then both $\alpha_k^0(s)$ and $\alpha_k^t(s)$ are
nonnegative, and they satisfy $\alpha_k^0(s) + \alpha^t(s)=1$.
\end{lemma}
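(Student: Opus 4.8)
The plan is to reduce the whole statement to one elementary inner-product inequality that is hidden inside constraint \eqref{const}. First I would introduce the shorthand $u := s$ and $v := s_k^t - s$, so that $u + v = s_k^t$; then \eqref{const} reads $\|u\|^2 + \|v\|^2 \le \|u+v\|^2$. Expanding the right-hand side and cancelling the common terms, this is equivalent to $u\tr v \ge 0$, and this single fact will carry the entire argument.

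Next I would re-express the two weights from \eqref{weights} in the same notation. Using $s = u$, $-s_k^t = -(u+v)$ and $s - s_k^t = -v$, the definitions give
\[
\alpha_k^0(s) = \frac{(s-s_k^t)\tr(-s_k^t)}{\|s_k^t\|^2} = \frac{u\tr v + \|v\|^2}{\|u+v\|^2}, \qquad
\alpha_k^t(s) = \frac{s\tr s_k^t}{\|s_k^t\|^2} = \frac{\|u\|^2 + u\tr v}{\|u+v\|^2}.
\]
Nonnegativity of both is then immediate from $u\tr v \ge 0$ together with $\|u\|^2,\|v\|^2 \ge 0$ (here one needs $s_k^t \ne 0$ so that the denominators make sense; but if $s_k^t = 0$ then \eqref{const} forces $s = 0$ and the weights are not even defined, so this degenerate case is excluded and I would note it in one line). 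For the normalization, I would simply add the two numerators: $(\|u\|^2 + u\tr v) + (u\tr v + \|v\|^2) = \|u+v\|^2$, whence $\alpha_k^0(s) + \alpha_k^t(s) = 1$. Note that this last part does not even use the constraint; it is purely the definition of the weights.

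I do not expect a genuine obstacle here: once the substitution $v = s_k^t - s$ is made, every step is a one-line expansion. The only thing worth highlighting is the conceptual content — that \eqref{const} is exactly a disguised way of saying the angle between $s$ and $s_k^t - s$ is non-obtuse, which is precisely what keeps both weights in $[0,1]$. If useful later, I would also record the slightly stronger chain $\|s\|^2 \le s\tr s_k^t \le \|s_k^t\|^2$ that falls out of the same computation, since it gives an a priori bound on $s\tr s_k^t$ convenient when estimating the model function $m_k^t$, even though it is not needed for the lemma as stated.
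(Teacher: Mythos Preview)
Your proposal is correct and follows essentially the same route as the paper's proof: both reduce constraint \eqref{const} to the inner-product inequality $s\tr(s_k^t - s) \ge 0$ (your $u\tr v \ge 0$), and both then read off the chain $0 \le s\tr s_k^t \le \|s_k^t\|^2$ and the identity $\alpha_k^0(s) + \alpha_k^t(s) = 1$ directly from the definitions. Your substitution $u = s$, $v = s_k^t - s$ is just a notational repackaging of the paper's computation, and the ``stronger chain'' you record at the end is precisely the inequality the paper displays.
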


We also make the following observations about the region defined by
constraint \eqref{const}. First, this region is never empty, since any
step $s=\gamma s_k^t$ for $\gamma \in (0,1]$ is in the region. This
implies in a sense that a backtracking operation on the previous trial
step, $s_k^t$ would give a new feasible trial step,
$s_k^{t+1}$. Second, there are always feasible steps in the steepest
descent direction \emph{whenever} $g_k\tr s_k^t \leq 0$ holds. That
is, the steps $s = - \xi g_k$ are feasible for $0 \leq \xi
\leq-\frac{g_k\tr s_k^t}{\|g_k\|^2}$. Thus, as long as the first trial step,
$s_k^0$ is gradient related, each feasible region constructed around a
subsequent trial step involves a feasible point that is also gradient
related.

Algorithm \ref{alg:outline} gives the outline of the proposed
method. The stopping condition is the usual check whether the current
iterate, $x_k$ is a stationary point. We start the inner iterations by
computing the first trial step, $s_k^0$. We shall elaborate on how to
select this initial trial step in Section \ref{sec:imp}. As long as
the trial steps $s_k^t$, $t=0,1,2,\ldots$ are not acceptable, we carry
on with constructing and minimizing the model functions. Once an
acceptable step is obtained by the inner iteration, we evaluate the
next iterate $x_{k+1}$ and continue with solving the overall problem.

\begin{algorithm}
    \label{alg:outline}
    \SetArgSty{textit}
    \KwIn{$x_0$; $\rho \in (0, 1)$; $k=0$}
    \While{$x_k$ is not a stationary point}{
      $t=0$\;
      Compute the first trial step $s_k^0$\;
    \While{$f_k^t - f_k > \rho g_k\tr s_k^t$}{
      Compute the trial step $s_k^{t+1}$ by solving
      \begin{equation}
        \label{eq:initsp}
      \begin{array}{ll}
        \minimize & m_k^{t}(s)=\alpha_k^0(s)l_k^0(s) + \alpha_k^t(s)l_k^t(s-s^t_k) \\
        \subto & \|s\|^2 + \|s - s_k^t\|^2 \leq \|s_k^t\|^2,
      \end{array}
      \end{equation}
      where $l^0, l^t, \alpha^0$, and $\alpha^t$ are given by
      \eqref{linfunc} and \eqref{weights}, respectively\;
      t = t + 1\;
    }
    $x_{k+1}=x_k+s_k^t$\;
  }
   \caption{Outline}
\end{algorithm}

In Algorithm \ref{alg:outline}, the crucial part is solving subproblem
\eqref{eq:initsp}, which has a quadratic objective function and a
quadratic constraint.  After some derivation, the objective function
of this subproblem simplifies to
\begin{equation*}
 m_k^t(s) = f_k + (\bar{g}^t_k)\tr  s  + s\tr  B^t_k s,
\label{model_t}
\end{equation*}
where 
\[
 \bar{g}^t_k := g_k + \frac{1}{\|s_k^t\|^2}(f_k^t - (g_k^t)\tr s_k^t - f_k)s_k^t
\]
and
\[
 B^t_k := \frac{1}{\|s_k^t\|^2}s_k^t(g_k^t-g_k)\tr.
\]
The Hessian of this quadratic objective function is the rank-one
matrix $B^t_k$. Its only nonzero eigenvalue is $(s_k^t)\tr y_k^t$, and
the eigenvectors corresponding to this eigenvalue are $s_k^t$ and
$-s_k^t$.  Furthermore, its eigenvectors corresponding to the zero
eigenvalues are perpendicular to $y_k^t$. Let us note at this point
that the gradient of the model $m_k^t(s)$ is given by
\begin{equation}
\label{eqn:cvxmodgra}
\nabla m_k^t(s) = g_k + \frac{1}{\|s_k^t\|^2}(f_k^t - (g_k^t)\tr s_k^t -
f_k)s_k^t + \frac{1}{\|s_k^t\|^2}[s_k^t(y_k^t)\tr +
{y_k^t}(s_k^t)\tr] s.
\end{equation}

\subsection{Convex Objective Function.}
\label{sec:initial_design}

Before considering the general case, we discuss in this section how
subproblem \eqref{eq:initsp} can be solved when the objective function
is convex.  We shall soon observe that it is not straightforward to
obtain trial steps that yield sufficient descent with this initial
design. Thus, we will propose modifications over this first attempt in
Section~\ref{sec:final_design} to guarantee convergence.  However, we
still present here our observations with the convex case as it
constitutes the basis of our approach for general functions given in
the next section.

We first check whether the subproblem always provides a nonzero step
at every inner iteration. It turns out that if the most recent trial
step, $s_k^t$ is gradient related, then the next trial step,
$s_k^{t+1}$ cannot be the zero vector. Furthermore, if the objective
function value of the recent trial step is strictly greater than the
current objective function value, then the optimal solution of the
subproblem is in the interior of the region defined by the constraint
\eqref{const}. Lemma \ref{lem:fact1} summarizes this discussion.  The
proof of this lemma is given in Appendix \ref{app:proofs}.
\begin{lemma}
\label{lem:fact1}
Let $f$ be a convex function and consider the inner iteration $t+1$ of
iteration $k$ with $\|s_k^t\| \neq 0$. If $g_k\tr s_k^t \leq 0$, then
$\|s_k^{t+1}\| \neq 0$ and
\[
\alpha^*:=\underset{0 \leq \alpha \leq 1}{\arg\min} \ m_k^t (\alpha s_k^t) = \min\left\{
\frac{(y_k^t)\tr s_k^t - f_k^t + f_k}{2(y_k^t)\tr s_k^t}, 1 \right\}.
\]
If we additionally have $f_k^t > f_k$, then $\alpha^* < 1$.
\end{lemma}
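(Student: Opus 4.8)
The plan is to work directly with the one-dimensional restriction $m_k^t(\alpha s_k^t)$ along the segment from $x_k$ to $x_k^t$, compute its minimizer over $[0,1]$, and then relate this to the full subproblem \eqref{eq:initsp}. First I would substitute $s = \alpha s_k^t$ into the simplified quadratic form $m_k^t(s) = f_k + (\bar g_k^t)\tr s + s\tr B_k^t s$. Since $B_k^t s_k^t = \frac{1}{\|s_k^t\|^2} s_k^t (y_k^t)\tr s_k^t = \frac{(y_k^t)\tr s_k^t}{\|s_k^t\|^2} s_k^t$, the quadratic term becomes $\alpha^2 (y_k^t)\tr s_k^t$, and $(\bar g_k^t)\tr s_k^t = g_k\tr s_k^t + (f_k^t - (g_k^t)\tr s_k^t - f_k)$. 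Convexity of $f$ gives $(y_k^t)\tr s_k^t = (g_k^t - g_k)\tr s_k^t \ge 0$, so $\phi(\alpha) := m_k^t(\alpha s_k^t)$ is a convex parabola (or, in the degenerate case $(y_k^t)\tr s_k^t = 0$, an affine function); setting $\phi'(\alpha) = 0$ and projecting onto $[0,1]$ yields the stated formula for $\alpha^*$, after simplifying the linear coefficient using $g_k\tr s_k^t \le 0$ to check that the unconstrained vertex is nonnegative (so the $\max\{\cdot,0\}$ is vacuous and only the cap at $1$ matters). The degenerate affine case must be handled separately: there $(y_k^t)\tr s_k^t = 0$ forces the expression inside the $\min$ to be interpreted in the limit, and one checks $\phi$ is nonincreasing on $[0,1]$ so $\alpha^* = 1$, consistent with the formula.

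Next, for the claim $\|s_k^{t+1}\| \neq 0$: I would argue that $s = \zv$ is never optimal for \eqref{eq:initsp}. Since $\alpha s_k^t$ is feasible for all $\alpha \in (0,1]$ (it lies in the constraint region, as noted right after Lemma \ref{lem:tr}), it suffices to exhibit one feasible point with strictly smaller model value than $m_k^t(\zv) = f_k$. Evaluating $\phi$ near $\alpha = 0^+$, the derivative $\phi'(0) = (\bar g_k^t)\tr s_k^t = g_k\tr s_k^t + f_k^t - (g_k^t)\tr s_k^t - f_k$. If this is negative, small $\alpha$ already decreases the model and we are done; the case that needs care is when $\phi'(0) \ge 0$, which I expect to reconcile with $g_k\tr s_k^t \le 0$ and convexity — specifically, convexity gives $f_k^t \ge f_k + g_k\tr s_k^t$ but also $f_k \ge f_k^t - (g_k^t)\tr s_k^t$, i.e. $f_k^t - (g_k^t)\tr s_k^t \le f_k$, so $\phi'(0) \le g_k\tr s_k^t \le 0$. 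Thus $\phi'(0) \le 0$ always, and strict inequality (hence a strictly better feasible point) holds unless $\phi'(0) = 0$; in that boundary case one falls back on the parabola having its vertex at $\alpha^* > 0$ or uses the gradient-related feasible direction $-\xi g_k$ noted in the text to produce descent, giving $\|s_k^{t+1}\| \neq 0$.

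Finally, for the statement that $f_k^t > f_k$ implies $\alpha^* < 1$: from the formula, $\alpha^* < 1$ iff $\frac{(y_k^t)\tr s_k^t - f_k^t + f_k}{2(y_k^t)\tr s_k^t} < 1$, i.e. iff $(y_k^t)\tr s_k^t - f_k^t + f_k < 2(y_k^t)\tr s_k^t$, i.e. iff $f_k - f_k^t < (y_k^t)\tr s_k^t$; since $f_k - f_k^t < 0 \le (y_k^t)\tr s_k^t$, this is immediate (and in the degenerate case $(y_k^t)\tr s_k^t = 0$ one must instead check $\phi$ is strictly decreasing, which follows from $\phi'(0) \le g_k\tr s_k^t$ together with $f_k^t > f_k$ forcing a strict drop, or argue the interior optimum of the full subproblem directly). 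The main obstacle I anticipate is not any single computation but the careful bookkeeping of the degenerate case $(y_k^t)\tr s_k^t = 0$, where the displayed quotient is formally $0/0$, and making sure the reduction from the full two-dimensional-looking subproblem \eqref{eq:initsp} to the one-dimensional line search $\phi$ is legitimate — i.e. that the optimum of \eqref{eq:initsp} indeed lies on the ray through $s_k^t$ or at least shares the relevant nonvanishing/interiority properties. I would resolve the latter by using the eigenstructure of $B_k^t$: the model is affine in directions orthogonal to $y_k^t$ and curved only along $s_k^t$, so the constrained minimizer's behavior in the $s_k^t$-component is governed precisely by $\phi$.
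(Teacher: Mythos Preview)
Your approach for the $\alpha^*$ formula and for the implication $f_k^t>f_k\Rightarrow\alpha^*<1$ is essentially the paper's: both reduce to the one-dimensional function $\phi(\alpha)=m_k^t(\alpha s_k^t)$, use convexity of $f$ to get $(y_k^t)\tr s_k^t\ge 0$ and $f_k^t-(g_k^t)\tr s_k^t-f_k\le 0$, and read off the vertex. The paper phrases the last part as ``$-s_k^t$ is a descent direction for $m_k^t$ at $s=s_k^t$'', which is exactly your inequality $f_k-f_k^t<(y_k^t)\tr s_k^t$ in disguise. Your worry about whether the full subproblem reduces to the line is unfounded for this part: the lemma \emph{defines} $\alpha^*$ as the one-dimensional minimizer, so no reduction is needed.

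There is, however, a genuine gap in your argument for $\|s_k^{t+1}\|\neq 0$. You correctly derive $\phi'(0)\le g_k\tr s_k^t\le 0$, but in the corner case $\phi'(0)=0$ both of your fallbacks fail. If $\phi'(0)=0$ and $(y_k^t)\tr s_k^t>0$, the parabola's vertex is \emph{at} $\alpha=0$, not at some $\alpha^*>0$; and $\phi'(0)=0$ forces $g_k\tr s_k^t=0$, so the feasible range for the step $-\xi g_k$ collapses to $\xi=0$. Thus neither the line through $s_k^t$ nor the ray $-\xi g_k$ produces a feasible point with strictly smaller model value. The paper avoids this by arguing directly that $\nabla m_k^t(0)\neq 0$: writing $\nabla m_k^t(0)=g_k+c\,s_k^t$ with $c=\|s_k^t\|^{-2}(f_k^t-(g_k^t)\tr s_k^t-f_k)\le 0$, one sees that $\nabla m_k^t(0)=0$ would give $g_k=-c\,s_k^t$ and hence $g_k\tr s_k^t=-c\,\|s_k^t\|^2\ge 0$, forcing $c=0$ and $g_k=0$, contradicting nonstationarity. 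Since $s=0$ lies on the boundary of the feasible set and the normal cone there is $\{\mu s_k^t:\mu\ge 0\}$, the same computation rules out the KKT conditions at $s=0$, so $s_k^{t+1}\neq 0$. You can patch your argument the same way, or simply exhibit the feasible descent direction $d=-g_k+\varepsilon s_k^t$ for small $\varepsilon>0$ in the corner case.
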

Note that we require condition $g_k\tr s_k^t \leq 0$ hold for all
trial steps to guarantee nonzero trial steps.  So, a convergence
argument for the proposed algorithm is not clear unless it ensures
this condition.  We shall visit this issue later in this section.

We next show how subproblem \eqref{eq:initsp} can be solved, if the
conditions given in Lemma \ref{lem:fact1} hold. This subproblem can be
rewritten as
\[
\begin{array}{ll}
  \minimize & [(f_k^t- (g_k^t)\tr s_k^t - f_k)s_k^t +
    \|s_k^t\|^2g_k]\tr s + s\tr [s_k^t(g_k^t-g_k)\tr]s\\ \subto & s\tr
  (s - s_k^t) \leq 0.
\end{array}
\]
Note that this is a convex program as $(s_k^t)\tr (g_k^t-g_k)\geq 0$
by the convexity assumption on $f$ in this section.  Let $s_k^{t+1}$
denote the optimal solution of the above problem and $\beta \geq 0$ be
the Lagrange multiplier corresponding to the constraint. The
Karush-Kuhn-Tucker optimality conditions imply
\begin{align}
  (f_k^t- (g_k^t)\tr s_k^t - f_k - \beta)s_k^t + \|s_k^t\|^2g_k +
  s_k^t(y_k^t)\tr s_k^{t+1} + y_k^t(s_k^t)\tr s_k^{t+1} + 2\beta s_k^{t+1} & = 0, \label{eqn:kkt1}\\
  \beta (s_k^{t+1} - s_k^t)\tr s_k^{t+1} & = 0. \label{eqn:kkt2}
\end{align}
Simplifying \eqref{eqn:kkt1} leads to
\begin{equation}
\label{eqn:cvxopt}
s_k^{t+1} = c_g(\beta) g_k + c_s(\beta) s_k^t + c_y(\beta) y_k^t,
\end{equation}
where
\[
c_g(\beta) = -\frac{\|s_k^t\|^2}{2\beta}, \qquad c_s(\beta) =
-\frac{\|s_k^t\|^2}{2\beta} \left(\frac{\delta}{\|s_k^t\|^2} -
  \frac{(y_k^t)\tr s_k^t + 2\beta}{\theta}((y_k^t)\tr g_k +
  \frac{\delta}{\|s_k^t\|^2} (y_k^t)\tr s_k^t ) +
  \frac{\|y_k^t\|^2}{\theta} ((s_k^t)\tr g_k + \delta)\right),
\]
\[
c_y(\beta) = -\frac{\|s_k^t\|^2}{2\beta} \left(- \frac{(y_k^t)\tr
    s_k^t + 2\beta}{\theta}((s_k^t)\tr g_k + \delta) +
  \frac{\|s_k^t\|^2}{\theta} ((y_k^t)\tr g_k +
  \frac{\delta}{\|s_k^t\|^2}(y_k^t)\tr s_k^t)\right),
\]
with
\[
 \delta = f_k^t-f_k-(g_k^t)\tr s_k^t-\beta \mbox{ and } \theta = \left((y^t_k)\tr s_k^t +
2\beta\right)^2-\|s_k^t\|^2\|y_k^t\|^.
\]
Consider first the case when $\beta = 0$.  Then, using Lemma
\ref{lem:fact1}, we have $s_k^{t+1}=\alpha^*s_k^t$.  When $\beta > 0$,
the optimal solution is at the boundary of the constraint. If we now
use \eqref{eqn:cvxopt} with \eqref{eqn:kkt2}, then after some
derivation we obtain
\[
 \frac{1}{4\beta\theta^2}P(\beta)=0,
\]
where $P(\beta)$ is a sixth order polynomial function of $\beta$ (see
the online
supplement\footnote{\url{http://people.sabanciuniv.edu/sibirbil/glob_strat/OnlineSupplement.pdf}}
for the details).  Thus, the subproblem can be solved by computing the
roots of this polynomial.

In fact, it is not hard to show that if the inner iteration step
$s_k^t$ satisfies $s_k^t=-M^tg_k$ for some matrix $M^t$, then at the
next inner iteration we have $s_k^{t+1}=-M^{t+1}g_k$ 
  However, it is not possible to guarantee that the
matrix $M^{t+1}$ will be positive definite given $M^t$ is.  That is
because the gradient of the model function is given as the conical
combination of $g_k$ and $-s_k^t$.  First of all, the model gradient
can be zero for $s=0$ at a point $x_k$ with $g_k\neq 0$.  Also, when
the deviation of the objective function $f$ from linearity is
relatively large, then the model gradient can point a \emph{backward}
direction; i.e. the reverse direction of $s_k^t$.  Since the
constraint of the model does not allow such a backward step, this can
cause $s_k^{t+1}$ to be \emph{zero}.  Moreover, the model gradient is
scaled with a matrix, and the scaled step adds a third component along
$y_k$ to the linear combination whose coefficient is not necessarily
positive.  So, the resulting $s_k^{t+1}$ might not satisfy
$g_k^Ts_k^{t+1}\leq 0$ even if $s_k^t$ does.

\subsection{General Objective Function.} 
\label{sec:final_design}

Using our observations on the convex case, we next concentrate on a
construction where the model function $m_k^t$ has the same gradient
value at $s=0$ as the original objective function, $f$. 
Then, we also add a regularization term so that 
the subproblem can be made convex even if $f$ is not.

We start by relaxing the constraint \eqref{const} and moving it to
the objective function. That is
\[
 \underset{s \in \RR^n}{\min} \{m^t_k(s) + \sigma_1 (s-s_k^t)\tr s\}.
\]

We choose
\[
\sigma_1 =  \frac{1}{\|s_k^t\|^2}( f_k^t - (g_k^t)\tr s_k^t - f_k ),
\]
so that $\nabla m_k^t(0) = g_k$. 
Observe that the above choice of $\sigma_1$ can be negative (for $f$ nonconvex), in which case the constraint operates the reverse way;
i.e. it tries to keep $s$ away from $s_k^t$.  Since that causes us to loose our control on the size of $s$, we add a regularization term
and the subproblem becomes
\[
 \underset{s \in \RR^n}{\min}\{m^t_k(s) + \sigma_1 (s-s_k^t)\tr s + \sigma_2 s\tr s\}.
\]
After simplifying the objective function, we obtain our new subproblem
for the general case
\begin{equation}
\label{eqn:newsub}
 \underset{s \in \RR^n}{\min} \{g_k^Ts + s\tr \frac{1}{\|s_k^t\|^2}[\sigma I + s_k^t(y_k^t)\tr] s\},
\end{equation}
where
\[
 \sigma = (\sigma_1 + \sigma_2)\|s_k^t\|^2  =  (f_k^t - (g_k^t)\tr s_k^t - f_k) + \sigma_2\|s_k^t\|^2.
\]

Note that the model function \eqref{eqn:newsub} is always
convex provided that the regularization parameter $\sigma_2$ is chosen
sufficiently large. Beyond the convexity of $m_k^t$, we want to
guarantee for some $\eta \in (0, 1)$ that the steplength condition
\begin{equation}
\label{eq:bkr}
 \|s_k^{t+1}\| \leq \eta \|s_k^t\|
\end{equation}
holds. Suppose $s_k^{t+1}$ is such a step and it is the optimal solution for
problem \eqref{eqn:newsub} with a convex objective function. Then, the
first order optimality condition implies
\begin{equation}
\label{eq:brt}
 s_k^{t+1} = -\|s_k^t\|^2(\bar B_k^t)^{-1}g_k,
\end{equation}
where
\[
\bar B_k^t = 2\sigma I + s_k^t(y_k^t)\tr + y_k^t(s_k^t)\tr.
\]
Thus, we obtain
\begin{align*}
\|s_k^{t+1}\| & \leq \|s_k^t\|^2 \left\|(\bar B_k^t)^{-1}\right\|\|g_k\|\\
        &  = \left( \|s_k^t\|\|g_k\| \lambda_{\min}(\bar B_k^t)^{-1}\right) \|s_k^t\|,
\end{align*}
where $\lambda_{\min}$ denotes the smallest eigenvalue of its matrix
parameter. It is not difficult to see that $2\sigma$ is an eigenvalue
of $\bar B_k^t$ with multiplicity $n-2$, and the
remaining two eigenvalues are the extreme eigenvalues of
$\bar B_k^t$ given by
\[
 \lambda_{\max}(\bar B_k^t) = 2\sigma + \|y_k^t\|\|s_k^t\|+(y_k^t)\tr s_k^t \quad \mbox{ and } \quad \lambda_{\min}(\bar B_k^t) = 2\sigma - \|y_k^t\|\|s_k^t\|+(y_k^t)\tr s_k^t,
\]
where $\lambda_{\max}$ denotes the largest eigenvalue of its matrix
parameter. 

To obtain a convex model, we need
\begin{equation}
\label{eq:ays}
 2\sigma-\|y^t_k\|\|s_k^t\|+(y^t_k)\tr s_k^t > 0. 
\end{equation}

On the other hand, the steplength condition \eqref{eq:bkr} requires
\begin{equation}
\label{eqn:sigma2}
\|s_k^t\|\|g_k\|\lambda_{min}(\bar B_k^t)^{-1}\leq \eta \quad \Rightarrow \quad 2\sigma-\|y^t_k\|\|s_k^t\|+(y^t_k)\tr s_k^t \geq \frac{\|s_k^t\|\|g_k\|}{\eta}
\end{equation}
Since the latter bound is larger, using relation \eqref{eqn:sigma2} for choosing $\sigma_2$ provides the convexity
requirement and satisfies the steplength condition.  That is
\[
 \sigma = \|s_k^t\|^2 (\sigma_2+\sigma_1) \geq \frac{1}{2}\left(\|s_k^t\|\left(\|y_k^t\|+\frac{1}{\eta}\|g_k\|\right)-(y_k^t)\tr s_k^t\right).
\]

Our last step is to state the minimizer $s^{t+1}_k$ of
\eqref{eqn:newsub} with selected $\sigma_1$ and $\sigma_2$
values. Following similar steps as in Section
\ref{sec:initial_design}, after some derivation we obtain
\begin{equation}
\label{step_dkt}
 s^{t+1}_k = c_g(\sigma) g_k + c_y(\sigma) y^t_k + c_s(\sigma) s^t_k, 
\end{equation}
where
\[
 c_g(\sigma) = -\frac{\|s_k^t\|^2}{2\sigma}, \quad c_y(\sigma) =
 -\frac{\|s_k^t\|^2}{2\sigma\theta}[-((y_k^t)\tr s_k^t +
   2\sigma)((s_k^t)\tr g_k) + \|s_k^t\|^2((y_k^t)\tr g_k)],
\]
\[
 c_s(\sigma) = -\frac{\|s_k^t\|^2}{2\sigma\theta}[-((y_k^t)\tr s_k^t + 2\sigma)((y_k^t)\tr g_k) + \|y_k^t\|^2((s_k^t)\tr g_k)],
\]
with 
\[
\theta = \left((y^t_k)\tr s_k^t + 2\sigma\right)^2-\|s_k^t\|^2\|y_k^t\|^2, 
\]
and
\begin{equation}
\label{eqn:sigma}
 \sigma =  \half\left(\|s_k^t\|\left(\|y_k^t\|+\frac{1}{\eta}\|g_k\|\right) - (y_k^t)\tr s_k^t\right).
\end{equation}
It is important to observe that the solution of the subproblem does
not require storing any matrices and the main computational burden is
only due to inner products, which could be done in parallel
(see Section \ref{sec:imp}).

When it comes to the convergence of the proposed algorithm, we are
saved basically by keeping the directions of its steps \emph{gradient
  related}.  However, we could not directly refer to the existing
convergence results, since the directions in our algorithm change
during inner iterations and the steplength is not computed by using a
one dimensional function. In the following convergence note, we only
assume additionally that the objective function $f$ is bounded below
and its gradient $\nabla f$ is Lipschitz continuous with parameter
$L$.

\begin{lemma}
\label{lemm:stepunc2}
Suppose that the first trial step $s_k^0$ satisfies 
$m_0\|g_k\|\leq\|s_k^0\|\leq M_0\|g_k\|$ and $0 \geq s_k^0g_k\geq
-\lambda_0\|g_k\|^2$ for some $m_0,M_0,\lambda_0\in(0,\infty)$. Then,
at any iteration $k$, the optimal solution $s_k^{t+1}$ in
\eqref{step_dkt} becomes an acceptable step satisfying
\eqref{eq:accunc2} in finite number of inner iterations at a
nonstationary point $x_k$.
\end{lemma}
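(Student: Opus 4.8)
The plan is to argue by contradiction: suppose the inner loop never terminates, so that for every $t \ge 0$ the trial step $s_k^{t+1}$ given by \eqref{step_dkt} fails the Armijo test \eqref{eq:accunc2}, i.e. $f_k^{t+1} - f_k > \rho\, g_k\tr s_k^{t+1}$. The first task is to show that the hypotheses on $s_k^0$ propagate: I would prove by induction that for all $t$ there are constants $m, M, \lambda \in (0,\infty)$ (depending only on $m_0, M_0, \lambda_0, L, \eta$ and $\rho$, but not on $t$) with $m\|g_k\| \le \|s_k^t\| \le M\|g_k\|$ and $0 \ge g_k\tr s_k^t \ge -\lambda\|g_k\|^2$. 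The upper bound $\|s_k^{t+1}\| \le \eta\|s_k^t\|$ is already built into the construction of $\sigma$ via \eqref{eqn:sigma2}, so $\|s_k^t\| \le \eta^t M_0\|g_k\|$ actually shrinks geometrically; the content is the \emph{lower} bound $\|s_k^{t+1}\| \ge m\|g_k\|$ and the gradient-relatedness $g_k\tr s_k^{t+1} \le 0$ with the uniform lower bound. For these I would use the explicit formula \eqref{eq:brt}, $s_k^{t+1} = -\|s_k^t\|^2 (\bar B_k^t)^{-1} g_k$, together with the eigenvalue formulas for $\bar B_k^t$: the smallest eigenvalue is $2\sigma - \|y_k^t\|\|s_k^t\| + (y_k^t)\tr s_k^t$, which by the choice \eqref{eqn:sigma} equals $\tfrac{1}{\eta}\|s_k^t\|\|g_k\|$, and the largest is $\tfrac{1}{\eta}\|s_k^t\|\|g_k\| + 2\|y_k^t\|\|s_k^t\|$. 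Hence $\|s_k^{t+1}\| \ge \|s_k^t\|^2\|g_k\| / \lambda_{\max}(\bar B_k^t) = \eta\|s_k^t\|\|g_k\| / (\|g_k\| + 2\eta\|y_k^t\|)$; since $\|y_k^t\| \le L\|s_k^t\| \le L M_0\|g_k\|$ by Lipschitz continuity, this gives $\|s_k^{t+1}\| \ge c\,\|s_k^t\|$ for a fixed $c > 0$ — but that is not yet a bound in terms of $\|g_k\|$, so instead I would argue that a \emph{finite} number of such contractions keeps $\|s_k^t\|$ bounded below by $m\|g_k\|$ only for boundedly many steps, which is exactly what we need.

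This is the crux, so let me restate the core estimate more carefully. From \eqref{eq:brt}, writing $v := -\|s_k^t\|^2 (\bar B_k^t)^{-1} g_k$, one has $g_k\tr s_k^{t+1} = -\|s_k^t\|^2\, g_k\tr (\bar B_k^t)^{-1} g_k \le -\|s_k^t\|^2 \|g_k\|^2 / \lambda_{\max}(\bar B_k^t) < 0$ because $\bar B_k^t$ is positive definite by \eqref{eq:ays}; plugging in $\lambda_{\max}(\bar B_k^t) \le \tfrac{1}{\eta}\|s_k^t\|\|g_k\| + 2LM_0\|g_k\|\|s_k^t\|$ yields
\[
 g_k\tr s_k^{t+1} \le -\frac{\eta\,\|s_k^t\|\,\|g_k\|}{1 + 2\eta L M_0} \;<\; 0,
\]
so gradient-relatedness with a quantitative slope is preserved as long as $\|s_k^t\|$ has not collapsed. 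Next, by Lipschitz continuity of $\nabla f$ (the standard descent-lemma estimate), $f_k^{t+1} - f_k \le g_k\tr s_k^{t+1} + \tfrac{L}{2}\|s_k^{t+1}\|^2$. Combining this with the assumed failure of \eqref{eq:accunc2} gives
\[
 \rho\, g_k\tr s_k^{t+1} < g_k\tr s_k^{t+1} + \tfrac{L}{2}\|s_k^{t+1}\|^2
 \;\;\Longrightarrow\;\;
 -(1-\rho)\, g_k\tr s_k^{t+1} < \tfrac{L}{2}\|s_k^{t+1}\|^2,
\]
hence, using the lower bound on $-g_k\tr s_k^{t+1}$ and the upper bound $\|s_k^{t+1}\| \le \eta\|s_k^t\|$,
\[
 (1-\rho)\,\frac{\eta\,\|s_k^t\|\,\|g_k\|}{1+2\eta L M_0} \;<\; \frac{L}{2}\,\eta^2\,\|s_k^t\|^2
 \;\;\Longrightarrow\;\;
 \|s_k^t\| \;>\; \frac{2(1-\rho)\,\|g_k\|}{L\,\eta\,(1+2\eta L M_0)} \;=:\; m\,\|g_k\|.
\]
So at a nonstationary point $x_k$ ($\|g_k\| > 0$), as long as the inner loop has not stopped, every $\|s_k^t\|$ is bounded below by the fixed positive quantity $m\|g_k\|$. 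But from the geometric decay $\|s_k^{t}\| \le \eta^{t}\|s_k^0\| \le \eta^t M_0\|g_k\|$ we get $m\|g_k\| < \eta^t M_0\|g_k\|$, i.e. $\eta^t > m/M_0$, which can hold only for $t < \log(M_0/m)/\log(1/\eta)$. Therefore the inner loop must terminate after at most finitely many iterations — contradiction — and the last trial step it produces satisfies \eqref{eq:accunc2}.

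I expect the main obstacle to be the bookkeeping that makes the induction genuinely uniform in $t$: one must verify that $\bar B_k^t$ is positive definite at every inner iteration (so that \eqref{eq:brt} and the eigenvalue formulas are valid), which needs $\sigma$ from \eqref{eqn:sigma} to make \eqref{eq:ays} hold — and this in turn must be checked to be compatible with $\sigma > 0$ (so $2\sigma$-eigenvalue direction is also convex), i.e. one needs $\|s_k^t\|(\|y_k^t\| + \tfrac1\eta\|g_k\|) > (y_k^t)\tr s_k^t$, which follows from Cauchy–Schwarz since $(y_k^t)\tr s_k^t \le \|y_k^t\|\|s_k^t\| < \|s_k^t\|(\|y_k^t\| + \tfrac1\eta\|g_k\|)$. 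A secondary subtlety is that the bound $\|y_k^t\| \le L\|s_k^t\|$ is what couples everything back to $\|g_k\|$ through $\|s_k^t\| \le M_0\|g_k\|$; keeping the constant $M$ from the statement available throughout (rather than only $\|s_k^t\|\le\eta^t M_0\|g_k\|$) is what lets the Lipschitz term be absorbed. Once positive-definiteness and these elementary inequalities are in place, the descent-lemma argument above closes the loop quantitatively, with an explicit iteration bound.
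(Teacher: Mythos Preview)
Your proposal is correct and follows essentially the same approach as the paper: both use the explicit representation $s_k^{t+1}=-\|s_k^t\|^2(\bar B_k^t)^{-1}g_k$, the eigenvalue formula $\lambda_{\max}(\bar B_k^t)=\|s_k^t\|(2\|y_k^t\|+\eta^{-1}\|g_k\|)$, the Lipschitz bound $\|y_k^t\|\le L\|s_k^t\|$, the geometric decay $\|s_k^t\|\le\eta^t M_0\|g_k\|$, and the descent lemma combined with the failure of \eqref{eq:accunc2} to reach a contradiction. The only cosmetic difference is the final packaging: the paper shows that $(1-\rho)\lambda_t\|g_k\|^2/\|s_k^t\|^2$ is bounded above by $L/2$ yet tends to infinity, whereas you rearrange the same inequality into a fixed lower bound $\|s_k^t\|>m\|g_k\|$ that collides with $\|s_k^t\|\le\eta^tM_0\|g_k\|$, yielding the explicit bound $t<\log(M_0/m)/\log(1/\eta)$ --- a slightly sharper statement for free. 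One expository remark: your first paragraph announces a uniform-in-$t$ lower bound $\|s_k^t\|\ge m\|g_k\|$ as an inductive goal, which cannot hold in general (and indeed you immediately note the geometric decay); the second paragraph gets this right, so just drop that framing and lead with the core estimate.
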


\begin{proof}
  Consider any iteration $k$.  Since $s_k^{t+1}$ is obtained by
  \eqref{eq:brt}, the steps computed at each inner iteration satisfy
  $-g_k\tr s_k^{t+1}\geq \lambda_{t+1}\|g_k\|^2$,
  $t=0, 1, 2, \cdots$ for
  $\lambda_{t+1}=\|s_k^t\|^2(\lambda_{max}(\bar B_k^t))^{-1}$.  Note for $t=0, 1, \cdots$ that $\lambda_{t}>0$ by the
  choice of $\lambda_0$ and the relation \eqref{eq:ays}. We have
\begin{align*}
\lambda_{t+1} \frac{\|g_k\|^2}{\|s_k^{t+1}\|^2} & \geq \lambda_{t+1} \frac{\|g_k\|^2}{\eta^2\|s_k^t\|^2}\\
& = \|s_k^t\|^2\left(\|s_k^t\|\left(\|y_k^t\|+\frac{1}{\eta}\|g_k\|\right) 
- (y_k^t)\tr s_k^t + \|y_k^t\|\|s_k^t\|+(y_k^t)\tr s_k^t\right)^{-1} \frac{\|g_k\|^2}{\eta^2\|s_k^t\|^2}\\ 
& = \frac{\|g_k\|^2}{\|s_k^t\|\left(2\|y_k^t\|+\eta^{-1}\|g_k\|\right)\eta^2} 
\geq \frac{\|g_k\|^2}{\|s_k^t\|\left(2L\|s_k^t\|+\eta^{-1}\|g_k\|\right)\eta^2}.
\end{align*}
Using \eqref{eq:bkr}, we have $\|s_k^t\| \leq \eta_t M_0 \|g_k\|$. Then,
\[
\lambda_{t+1} \frac{\|g_k\|^2}{\|s_k^{t+1}\|^2} \geq \frac{\|g_k\|^2}{\left(2L\eta^{2t}M_0^2+\eta^tM_0\eta^{-1}\right)\|g_k\|^2\eta^2} 
= \frac{1}{2L\eta^{2t+2}M_0^2+\eta^{t+1}M_0}.
\]
Therefore, we obtain
\begin{equation}
\label{eq:chg}
 \lambda_t \frac{\|g_k\|^2}{\|s_k^t\|^2} \geq \frac{1}{2L\eta^{2t+1}M_0^2+\eta^tM_0}.
\end{equation}
Suppose that the acceptance criterion \eqref{eq:accunc2} is never
satisfied as $t\rightarrow\infty$.  Then,
\[ 
f_k^t - f_k > \rho g_k\tr s_k^t , \quad \mbox{ for all } t.
\] 
This implies
\[
 g_k\tr s_k^t + \frac{L}{2}\|s_k^t\|^2 > \rho.
\]
Next the desired inequality is obtained by
\[
    \frac{L}{2} > (1-\rho)
    \frac{(-g_k\tr s_k^t)}{\|s_k^t\|^2} \geq (1-\rho) \lambda_t
    \frac{\|g_k\|^2}{\|s_k^t\|^2} \geq (1-\rho) \frac{1}{2L\eta^{2t+1}M_0^2+\eta^tM_0}.
\]
Since $\eta\in(0,1)$, the right hand side of the last inequality
increases without a bound as $t\rightarrow \infty$.  This gives a
contradiction as $L$ is bounded. So, an acceptable $\|s_k^t\|$ should
be obtained in finite number of inner iterations.
\end{proof}

Note that the conditions on the initial step length, $s_k^0$ in Lemma
\ref{lemm:stepunc2} are simply satisfied, if we choose $s_k^0 =
-\epsilon g_k$ for any $ \epsilon \in (0, 1]$. This implies
  $m_0=M_0=\lambda_0=\epsilon$.

\begin{theorem}                                                                                          
Let $\{x_k\}$ be the sequence of iterates generated
  by Algorithm \ref{alg:parimp} and $\{s_k^0\}$ satisfy the
  requirements in Lemma~\ref{lemm:stepunc2}.  Then, any limit point of
  $\{x_k\}$ is a stationary point of the objective function $f$.
\end{theorem}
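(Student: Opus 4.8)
The plan is to run a Zoutendijk-type argument built on the Armijo acceptance rule \eqref{eq:accunc2} together with the gradient-relatedness of the accepted steps that is implicit in Lemma~\ref{lemm:stepunc2}, following the classical scheme for gradient-related methods (cf.\ \cite{Nocedal:2006}).

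\emph{Step 1: the objective values decrease and converge.} First I would observe that by Lemma~\ref{lemm:stepunc2} each outer iteration ends with an accepted step $s_k=s_k^{T_k}$ satisfying $f_{k+1}-f_k\le\rho\,g_k\tr s_k$. Since $\bar B_k^t$ is positive definite by construction, the formula \eqref{eq:brt} gives $g_k\tr s_k=-\|s_k^{T_k-1}\|^2\,g_k\tr(\bar B_k^{T_k-1})\inv g_k<0$ at a nonstationary $x_k$ (and the initial step is gradient related by hypothesis, covering $T_k=0$). Hence $f_{k+1}\le f_k$, and since $f$ is bounded below the sequence $\{f_k\}$ converges; therefore $\sum_k(f_k-f_{k+1})<\infty$ and in particular $-g_k\tr s_k\to0$.

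\emph{Step 2: a uniform descent estimate $-g_k\tr s_k\ge\mu\,\|g_k\|^2$.} I would extract two facts from the proof of Lemma~\ref{lemm:stepunc2}. First, using \eqref{eqn:sigma}, the stated eigenvalues of $\bar B_k^t$, and $\|y_k^t\|\le L\|s_k^t\|\le LM_0\|g_k\|$, one gets $\lambda_{\max}(\bar B_k^t)=\|s_k^t\|\bigl(2\|y_k^t\|+\eta\inv\|g_k\|\bigr)$, so from \eqref{eq:brt} and $(\bar B_k^t)\inv\succeq\lambda_{\max}(\bar B_k^t)\inv I$,
\[
-g_k\tr s_k^{t+1}\ \ge\ \frac{\|s_k^t\|\,\|g_k\|}{2LM_0+\eta\inv},
\qquad
\|s_k^{t+1}\|\ \ge\ \frac{\|s_k^t\|}{2LM_0+\eta\inv}.
\]
Second, the inner-loop length $T_k$ is bounded by a constant $\bar T$ that does not depend on $k$: exactly as in the closing estimate of the proof of Lemma~\ref{lemm:stepunc2}, failure of \eqref{eq:accunc2} at inner iteration $t$ forces $2L\eta^{2t+1}M_0^2+\eta^tM_0>2(1-\rho)/L$, whose left-hand side is decreasing in $t$ and free of $k$, hence cannot hold once $t\ge\bar T$. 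Iterating the second inequality above at most $\bar T$ times from $\|s_k^0\|\ge m_0\|g_k\|$ gives $\|s_k\|=\|s_k^{T_k}\|\ge m_0(2LM_0+\eta\inv)^{-\bar T}\|g_k\|=:c\|g_k\|$; substituting this into the first inequality when $T_k\ge1$, and using the gradient-relatedness $-g_k\tr s_k^0\ge\lambda_0\|g_k\|^2$ when $T_k=0$, yields $-g_k\tr s_k\ge\mu\,\|g_k\|^2$ with $\mu:=\min\{\lambda_0,\ c/(2LM_0+\eta\inv)\}>0$.

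\emph{Step 3: conclusion.} Combining \eqref{eq:accunc2} with Step~2, $f_k-f_{k+1}\ge\rho(-g_k\tr s_k)\ge\rho\mu\,\|g_k\|^2$; telescoping and using that $f$ is bounded below gives $\sum_k\|g_k\|^2<\infty$, hence $\|\nabla f(x_k)\|\to0$. If $x_{k_j}\to\bar x$ for some subsequence, continuity of $\nabla f$ gives $\nabla f(\bar x)=\lim_j\nabla f(x_{k_j})=0$, so $\bar x$ is stationary. The main obstacle I anticipate is Step~2 --- obtaining a lower bound $\|s_k\|\ge c\|g_k\|$ uniform in $k$: an accepted step can be much shorter than $s_k^0$, and a careless estimate loses a factor of $\eta$ per inner iteration in the unhelpful direction, so one must use both the \emph{lower} bound on the per-inner-iteration shrinkage of $\|s_k^t\|$ and the uniform-in-$k$ bound $\bar T$ on the number of inner iterations. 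A minor secondary point is the treatment of $T_k=0$ and the precise sense in which $s_k^0$ must be gradient related; the choice $s_k^0=-\epsilon g_k$ from the remark after Lemma~\ref{lemm:stepunc2} makes everything explicit.
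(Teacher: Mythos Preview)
Your argument is correct and follows the same route as the paper's: establish that the accepted step satisfies $-g_k\tr s_k\ge\lambda\|g_k\|^2$ with $\lambda>0$ uniform in $k$, via the eigenvalue bounds on $\bar B_k^t$ combined with a bound on the number of inner iterations, and then use the Armijo decrease together with boundedness below of $f$. The only differences are cosmetic: you make the uniform inner-loop bound $\bar T$ explicit (the paper writes only that $\lambda_{t(k)}$ is bounded away from zero ``as $t(k)$ is finite'') and you finish with a Zoutendijk telescoping sum giving $\|g_k\|\to 0$ for the full sequence, whereas the paper passes directly to a convergent subsequence; both closings are standard and equivalent here.
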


\begin{proof}
By Lemma~\ref{lemm:stepunc2}, for any $k$, \eqref{eq:accunc2} is
satisfied after a finite number of inner iterations, say $t(k)$.  Also,
note that relation \eqref{eq:brt} implies $\|s_k^t\|\geq \lambda_t
\|g_k\|$ with the same $\lambda_t$ as in \eqref{eq:chg}.  We will now
derive a lower bound for $\lambda_{t(k)}$:
\begin{align*}
\lambda_{t+1} & =
\|s_k^t\|^2\left(\|s_k^t\|\left(\|y_k^t\|+\frac{1}{\eta}\|g_k\|\right)
- (y_k^t)\tr s_k^t + \|y_k^t\|\|s_k^t\|+(y_k^t)\tr
s_k^t\right)^{-1}\\ 
& = \frac{\eta\|s_k^t\|}{\|g_k\|+2\eta\|y_k^t\|} \geq
\frac{\eta\|s_k^t\|}{\|g_k\|+2L\eta\|s_k^t\|} \geq
\frac{\eta}{\|g_k\|+2L\eta^{t+1}M_0\|g_k\|}\|s_k^t\|.
\end{align*}
So, 
\[
 \lambda_1 \geq \frac{\eta}{1+2L\eta M_0}m_0 \quad \mbox{and} \quad \lambda_t\geq \frac{\eta}{1+2L\eta^t M_0}\lambda_{t-1}
 \geq \frac{\eta}{1+2L\eta M_0}\lambda_{t-1} \geq \left(\frac{\eta}{1+2L\eta M_0}\right)^t m_0, \ t=2,3,\cdots
\]
Recall that for the accepted step $s_k$ of iteration $k$ we have
$-s_k\tr g_k\geq\lambda_{t(k)}\|g_k\|^2$.  Clearly, $\lambda_{t(k)}>0$
is bounded away from zero as $t(k)$ is finite. Consider now any
subsequence of $\{x_k\}$ with indices $k\in \mathcal{K}$ such that
\[
 \underset{k\in\mathcal{K}}{\lim} \: x_k = \hat{x}.
\]
Then, for any
$k,k^\prime\in \mathcal{K}$ with $k^\prime > k$ we have
\begin{equation}
\label{unc2son}
f_k - f_{k^\prime} \geq f_k - f_k^t \geq -\rho g_k\tr s_k \geq \rho\lambda_{t(k)} \|g_k\|^2.
\end{equation}
Since $\{x_k\}_{k\in\mathcal{K}}$ converges to $\hat{x}$, the
continuity of $f$ implies that $f_k\rightarrow \hat{f}$ as
$k\in\mathcal{K}$. Therefore $f_k - f_{k^\prime}\rightarrow 0$ as
$k,k^\prime\rightarrow\infty$. Thus, we obtain $\nabla f(\hat{x}) = 0$
by \eqref{unc2son} since $\lambda_{t(k)}$ is bounded away from zero.
\end{proof}

\subsection{Relationship with Quasi-Newton Updates. }
\label{sec:quasi}
Observing the formula in \eqref{eq:brt}, we next question the
relationship between the step computation of the proposed algorithm
and the quasi-Newton updates, in particular the DFP formula
\cite{Nocedal:2006}. The key to understanding this relationship is to
treat the current iterate $x_k$ as the previous iterate, whereas our
trial step $x_k^t$ becomes the next iterate in standard quasi-Newton
updates.

Let $D_{k}$ denote the quasi-Newton approximation to the Hessian at
iterate $x_k$. Then, the infamous DFP formula yields the new
approximation to the Hessian in the next iterate $x_{k+1}$ as
\[
D_{k+1} = \left(I - \frac{y_ks_k\tr}{y_k\tr s_k}\right)D_k\left(I -
  \frac{s_ky_k\tr}{y_k\tr s_k}\right) + \frac{y_ky_k\tr}{y_k\tr s_k},
\]
where $s_k = x_{k+1} - x_k$ and
$y_k = \nabla f(x_{k+1}) - \nabla f(x_{k})$. We first substitute
$s_k^t$ for $s_k$ and $y_k^t$ for $y_k$ in the above equation. Then,
by setting
\[
D_k = -\frac{(y_k^t)s_k^t}{\|s_k^t\|^2} I,
\]
we obtain
\[
D_{k+1} = \frac{1}{\|s_k^t\|^2} \left((-y_k^t)\tr s_k^t I + s_k^t (y_k^t)\tr +
y_k^t (s_k^t)\tr\right).
\]
Let us now denote the Hessian of the objective our model function
given in relation \eqref{eqn:newsub} by $H_k^{t}$. Then, we have
\[
H_k^{t} = \frac{1}{\|s_k^t\|^2} \left(2 \sigma I + s_k^t (y_k^t)\tr +
y_k^t (s_k^t)\tr\right).
\]
Next we plug in the value of $\sigma$ from \eqref{eqn:sigma} and
obtain 
\[
H_k^{t} = \frac{1}{\|s_k^t\|^2} \left((-y_k^t)\tr s_k^t I + s_k^t (y_k^t)\tr +
y_k^t (s_k^t)\tr\right) + \left(\frac{\|y_k^t\|+\eta^{-1}\|g_k\|}{\|s_k^t\|}\right) I.
\]
The last term above, in a sense, serves the purpose of obtaining a
positive definite matrix. Thus, the objective function becomes
convex. Looking at our approach from the quasi-Newton approximation
point of view, we again confirm that the additional information
collected from the trial iterate $x_k^t$ is used to come up with a
better model around the current iterate $x_k$. Our derivation above
also shows that the construction of the model function $m^{t+1}(s)$
from one inner iteration to the next is completely
independent. 

\section{Practical Performance.}
\label{sec:perf}

In this section, we shall conduct a numerical study to demonstrate the
novel features of the proposed globalization strategy. We have
compiled a set of unconstrained optimization problems from the
well-known CUTEst collection \cite{Cute:2004}. We have also embedded
the proposed strategy (PS) into the L-BFGS package
\cite{lbfgs:package} as a new globalization alternative to the
existing two line search routines; backtracking (BT) and More-Thuente
line search (MT).

First, let us give a two-dimensional example to illustrate how the
proposed strategy can be beneficial. Figure \ref{fig:rose} shows the
contours of the Rosenbrock function. The inner iterates of PS are
denoted by $+$ markers, the inner iterates of BT are denoted by
$\circ$ markers, and the inner iterates of MT are denoted by $\times$
markers. All three globalization strategies start from the same
initial point. The numbers next to the trial points denote the inner
iterations. The corresponding objective function values are given in
the subplot at the southeast corner of the figure.

\begin{figure}
\centering
\includegraphics[trim={3cm 4cm 0 3cm}, scale=0.7]{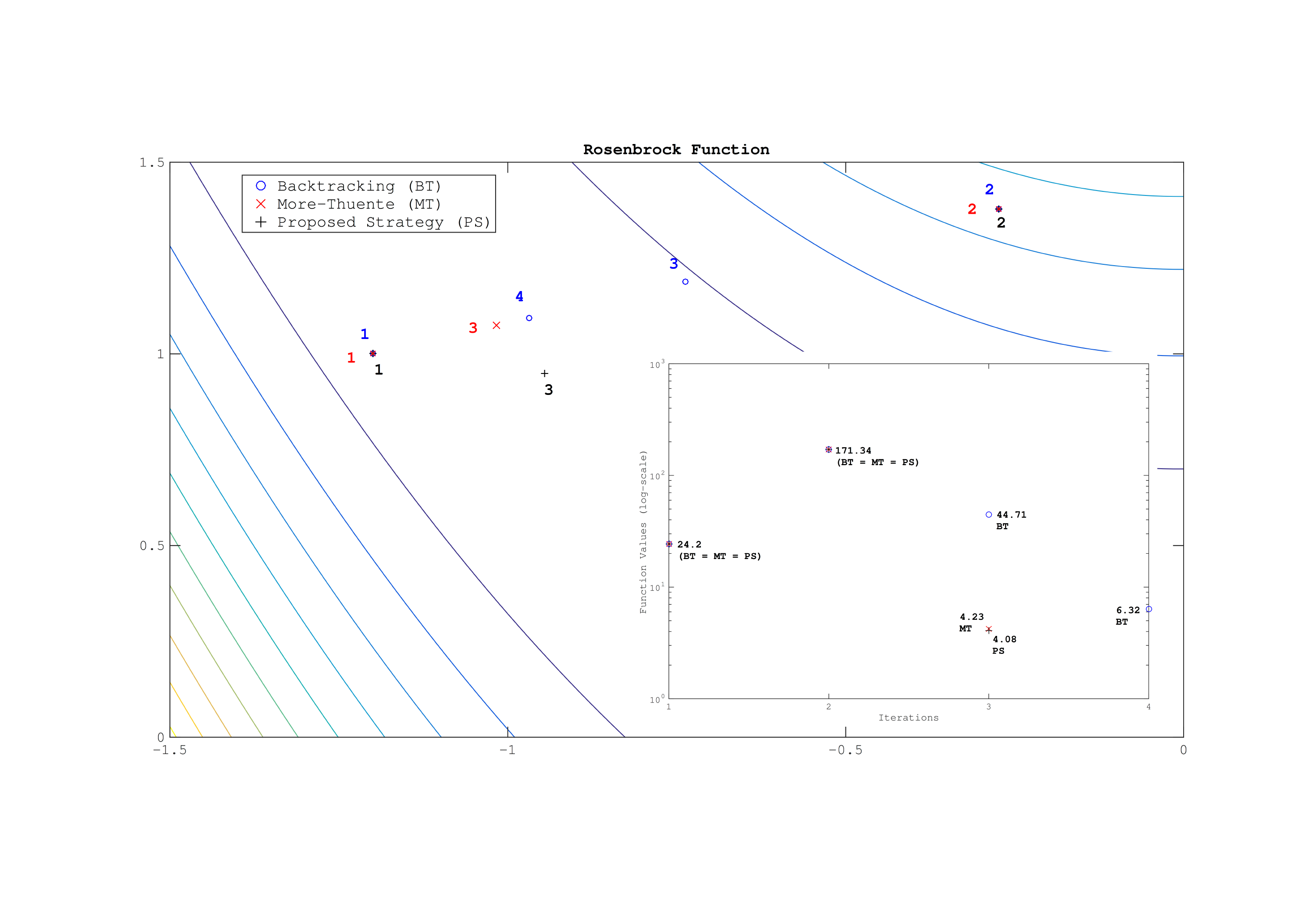}
\caption{The inner iterations obtained with the two line search methods
and the proposed strategy on the Rosenbrock function.}
\label{fig:rose}
\end{figure}

Figure \ref{fig:rose} illustrates the main point of the proposed
strategy: PS can change the direction of the trial step as well as its
length.  It is important to note that PS is not a trust-region method
because it builds a new model function by using the current trial step
for computing the next trial step. However, in a trust-region method
the model function is not updated within the same iteration and the
current trial step does not contribute to the computation of the next
trial step. For this particular two-dimensional problem, PS performs
better than the other two globalization strategies as it attains the
lowest objective function value after three inner iterations.

Next, we test the effect of the direction updates introduced by the
proposed strategy. We solve the CUTEst problems by employing the
backtracking line search with Wolfe conditions using the default
reduction factor of 0.5 given in the L-BFGS package. For the proposed
strategy, we also set the parameter $\eta$ to 0.5. Therefore, the new
strategy requires the same amount of steplength decrease (but it may
additionally alter the search direction). In our tests, we start with
both the L-BFGS step with memory set to 5 ($m=5$) and the gradient
step ($m=0$).  The maximum number of iterations is set to 500 for the
L-BFGS step, and to 1,000 for the gradient step. The comparison is
carried out for those problems, where both methods obtained the same
local solution (which is assessed by comparing the objective function
values and the first two components of the solution vectors).  The
results are reported for 30 problems when $m=0$, and for 63 problems
when $m=5$.

\begin{figure}
\centering
\includegraphics[trim={0 3cm 0 0}, scale=0.30]{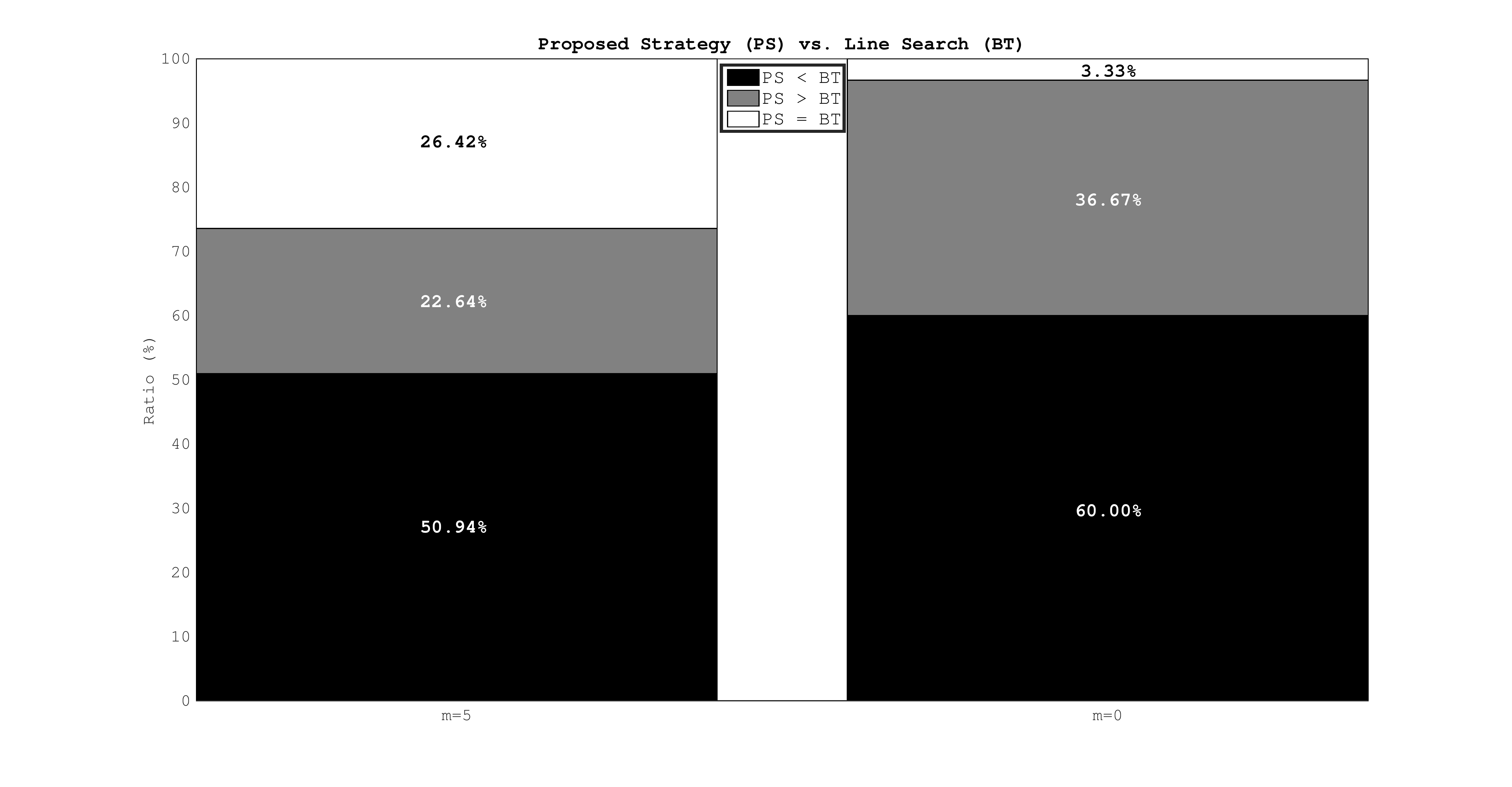}
\caption{Comparison of the proposed strategy against the line search
  method in terms of total number of function evaluations.}
\label{fig:compare}
\end{figure}

Figure~\ref{fig:compare} summarizes our observations. We compare two
strategies in terms of the total number of function evaluations
required to solve the problems. When $m=0$, solving $60\%$ of the
problems required less number of function evaluations than the line
search. The same figure becomes $50.94\%$ when $m=5$, and solving
$26.42\%$ of the problems took exactly the same number of iterations
with both strategies. Overall, these results show that the direction updates 
of the new strategy can indeed provide improvements.

\section{Parallel Implementation.}  
\label{sec:imp}

In this section, we will provide an outline of the subproblem solution
operations while we discuss their parallel execution. Then, we conduct
some numerical experiments to illustrate the parallel performance of the
proposed algorithm. 

As relation \eqref{step_dkt} shows, the basic trial step at inner
iteration $t+1$ is in the subspace spanned by $g_k$, $g_k^t$, and the
previous trial step $s_k^t$.  We should also emphasize that the
computation of $s_k^{t+1}$ requires just a few floating point
operations, once the necessary inner products are completed.  In
particular, the computation of \eqref{step_dkt} requires the following
inner products.
\begin{equation}
\begin{array}{l}
v_1 = (s^t_k)\tr y^t_k, \quad\quad v_2 = (s^t_k)\tr s^t_k, \quad\quad v_3 = (y^t_k)\tr y^t_k, \\[2mm]
v_4 = (y^t_k)\tr g_k, \quad\quad v_5 = g_k\tr g_k, \quad\quad\quad v_6 = (s^t_k)\tr g_k. \\ 
\end{array}
\label{eq:p1}
\end{equation}
Once the scalar values $v_i$, $i=1, \cdots,6$ are available, the
multipliers $c_g(\sigma)$, $c_s(\sigma)$, $c_y(\sigma)$ in
\eqref{step_dkt} can be computed in a total of 32 floating point
operations as follows:
\begin{equation}
 \begin{array}{c}
   \sigma =
  \half(\sqrt{v_2}(\sqrt{v_3}+\frac{1}{\eta}\sqrt{v_5})-v_1),
  \quad\quad \theta = (v_1+2\sigma)^2-v_2v_3, \quad \quad c_g(\sigma) = \frac{-v_2}{2\sigma},\\[2mm]
   c_s(\sigma) = c_g(\sigma)\left(-\frac{v_2+2\sigma}{\theta}v_4 + \frac{v_3}{\theta}v_6\right), \quad\quad c_y(\sigma) = c_g(\sigma)\left(-\frac{v_2+2\sigma}{\theta}v_6 + \frac{v_2}{\theta}v_4\right).
 \end{array}
\label{eq:p2}
\end{equation}
Clearly, the total cost of synchronization operations is negligible.
Thus, the parallel execution of the above inner products determine the
parallel performance as well as the two remaining components of the
algorithm: the computation of initial trial step $s_k^0$ and
function/gradient evaluations.  We require that $s_k^0$ is computed
without introducing too much sequential work to the overall algorithm,
and satisfy the conditions in Section~\ref{sec:algo}.  A simple choice
could be the gradient step, which we also preferred in our own
implementation.

The parallel implementation given in Algorithm \ref{alg:parimp} is
considered for a shared memory environment with $p$ physical cores
(threads). 
Note that the parallel implementation of function/gradient computations is clearly problem
dependent. However --at least partial-- separability is likely to occur
for large-scale problems. This issue, we assume, is taken care of by
the user.  


\begin{algorithm}
    \SetArgSty{textit}
  \KwIn{$x_0$; $\rho \in (0, 1)$; $k=0$}
  \While{$x_k$ is not a stationary point}{ $t=0$\; Compute the first
    trial step $s_k^0$ \hspp (Parallel)\; $x_k^0 = x_k + s_k^0$ \hspp (Parallel)\;
    $\Delta = g_k\tr s_k^0$ \hspp (Parallel)\; \For{$t=1,2, \cdots$}{ Compute
      $f_k^t$, $g_k^t$ \hspp (Parallel)\; \If{$f_k - f_k^t \geq -\rho
        \Delta$}{ $x_{k+1} = x_k^t$, $f_{k+1} = f_k^t$, $g_{k+1} =
        g_k^t$\; $k=k+1$\; \textbf{break}\;
	}
	Compute $v_i,i=1,2, \cdots,6$ as in \eqref{eq:p1} \hspp (Parallel, $O(n/p)$)\; 
	Compute $c_g,c_s,c_y$ as in \eqref{eq:p2} \hspp (Sequential, $O(1)$)\; 
	$\Delta = c_g v_5+c_y v_4+c_s v_6$ \hspp (Sequential, $O(1)$)\; 
	$s_k^{t+1} = c_g g_k + c_y y^t_k + c_s s^t_k$ \hspp (Parallel)\;
	$x_k^{t+1}=x_k^t+s_k^{t+1}$ \hspp (Parallel)\;
     }
  }
  \caption{Parallel Implementation}
  \label{alg:parimp}
\end{algorithm}

We coded the algorithm in C using OpenMP.  We have selected three test
problems from the CUTEst collection \cite{Cute:2004}, whose
dimensions can be varied to obtain small- to large-scale problems,
namely COSINE, NONCVXUN, and ROSENBR.  The objective functions of the
problems, and the initial points used in our tests are as follows:
\[
\begin{array}{c}
 f(x) = \sum_{i=1}^{n-1} \cos({-0.5*x_{i+1}+x_i^2}), \qquad x_i^0 =
 1.0,  \qquad \mbox{(COSINE)} \\
 f(x) = \sum_{i=1}^n x_i^2 + 4\cos(x_i), \qquad  x_i^0 = \log(1.0+i),
 \qquad \mbox{(NONCVXUN)} \\
 f(x) = \sum_{i=1}^{n-1} 100(x_{i+1}-x_i^2)^2+(1-x_i)^2, \qquad x_i^0 = 1.2, \qquad \mbox{(ROSENBR)}
\end{array}
\]
where $x_i$ denotes the $i$th component of point $x$ and $x^0$ is the
initial point. We note that the function and gradient evaluations of
all three test problems are in the order of $O(n)$. So, function
evaluations are not dominant to the computations of the new strategy.
Moreover, the evaluations of all three functions can be done in
parallel with one synchronization in computing the objective function
value.

To test the parallelization performance of the algorithm, we set the
dimension of the problems to $5$ million, $25$ million and $125$
million, and solve each of the resulting instances for various values
of the total number of threads, $p \in \{1,2,4,8,16\}$.  We run the
algorithm by setting $\eta=0.5$, and the step acceptability is checked
using condition \eqref{eq:accunc2} with $\rho=0.1$.  We set both the
maximum number of inner iterations and the maximum number of outer
iterations to $100$.  If the algorithm cannot reach a solution with
the desired accuracy
\[
\frac{\|\nabla f(x_k)\|}{\max\{||x_k||,1\}}<10^{-5}.
\] 
then we report the clock time elapsed until the maximum number of
outer iterations is reached. We summarize these results in
Table~\ref{tab:unc2test2} 
 We observe that the speed-up is close to linear for up to
8 cores. Since there is no data reuse, if we further increase the
number of cores, the main memory bandwidth becomes the
bottleneck. This is a common issue when memory-bounded computations,
like inner product evaluations, are parallelized.

 \begin{table}
 \centering
 \caption{Run times (in seconds) for varying values of $n$ and $p$}
 \label{tab:unc2test2}
 \begin{tabular}{lr|rrrrr}
 \hline\hline
 \multicolumn{1}{c}{Problem} & \multicolumn{1}{c|}{$n$} 
 & \multicolumn{1}{c}{$p=1$} & 
 \multicolumn{1}{c}{$p=2$} & \multicolumn{1}{c}{$p=4$} & \multicolumn{1}{c}{$p=8$} & \multicolumn{1}{c}{$p=16$} \\
 \hline
   & 5M 
   & 8.80 & 4.41 & 2.24 & 1.29 & 0.89 \\ 
   COSINE & 25M 
   & 43.98 & 22.07 & 11.19 & 6.33 & 4.35\\ 
   & 125M 
   & 219.77 & 111.74 & 57.98 & 31.62 & 21.47\\ 
 \hline
   & 5M 
   & 25.19 & 12.57 & 6.49 & 4.08 & 2.88 \\ 
  NONCVXUN & 25M 
  & 118.59 & 58.63 & 30.44 & 18.39 & 13.29 \\ 
   & 125M 
   & 496.45 & 249.43 & 127.38 & 75.43 & 55.07 \\ 
  \hline
    & 5M 
    & 45.93 & 22.69 & 11.89 & 7.91 & 6.11 \\ 
  ROSENBR & 25M 
  & 219.52 & 111.16 & 57.02 & 35.74 & 28.31 \\ 
    & 125M 
    & 1113.83 & 554.07 & 283.83 & 178.41 & 140.60 \\
   \hline
 \end{tabular}
 \end{table}

Finally, we consider the load balance issue. To observe the usage of
capacity and the workload distribution, we plot the CPU usage during
the solution of 1M-dimensional instance of the problem COSINE
(see Figure~\ref{fig:cpu}).  The plots are obtained by recording the
CPU usage information per second via the \texttt{mpstat} command-line
tool (see Linux manual pages). Figure~\ref{fig:cpu}(a) reveals the
idle resources when the program runs sequentially.  In fact, during
the sequential run, the average resource usage stays at a level of
$11.12\%$. This value is computed by taking the average usage of eight
cores. When eight threads are used, this average raises up to
$78.65\%$. Figure~\ref{fig:cpu}(c) shows the usage of all available
resources as well as the distribution of the workload.

\begin{figure}
\centering
\subfigure[$p=1$]{
\scalebox{0.38}{\includegraphics{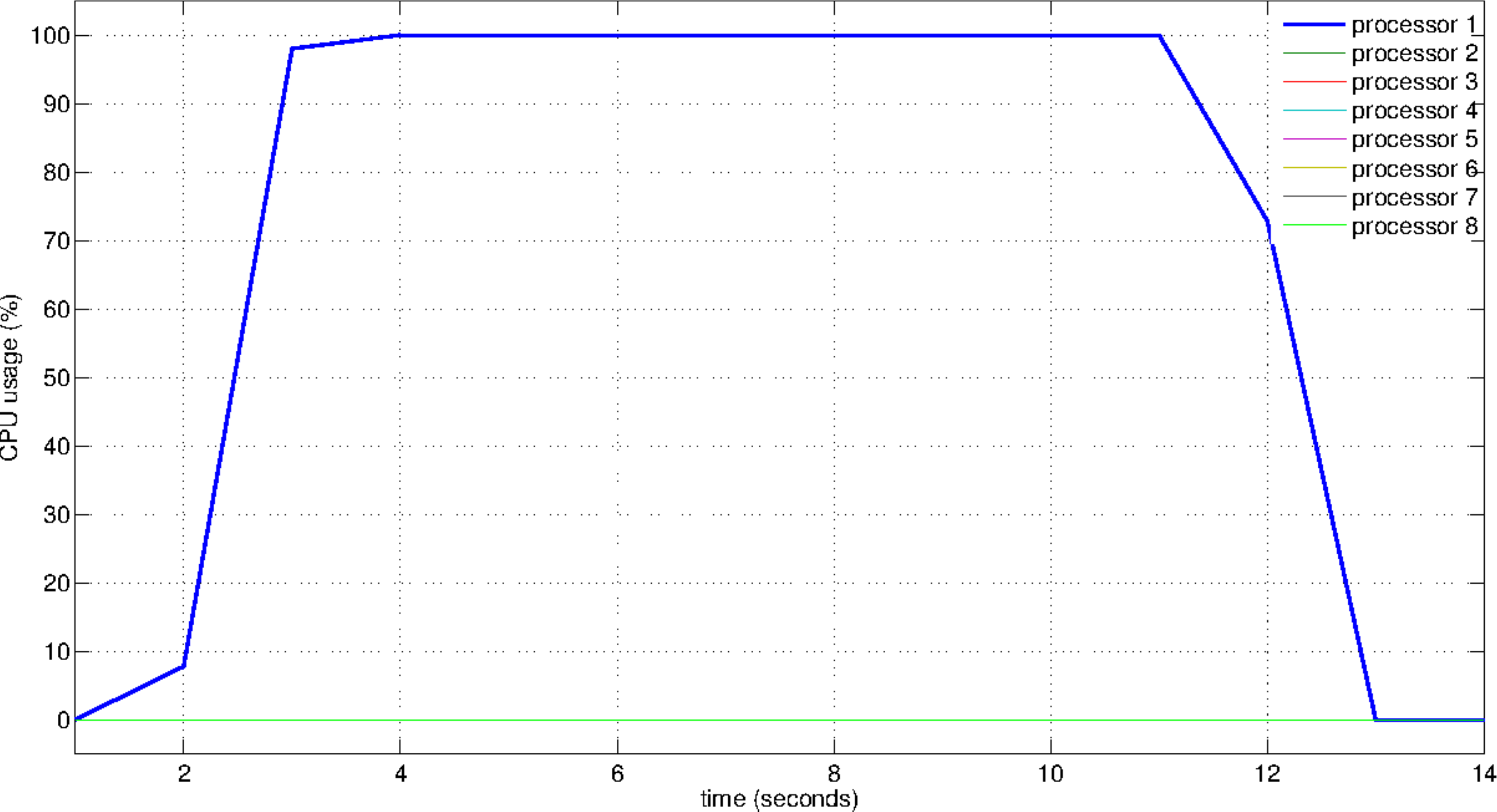}}
}
\subfigure[$p=2$]{
\scalebox{0.38}{\includegraphics{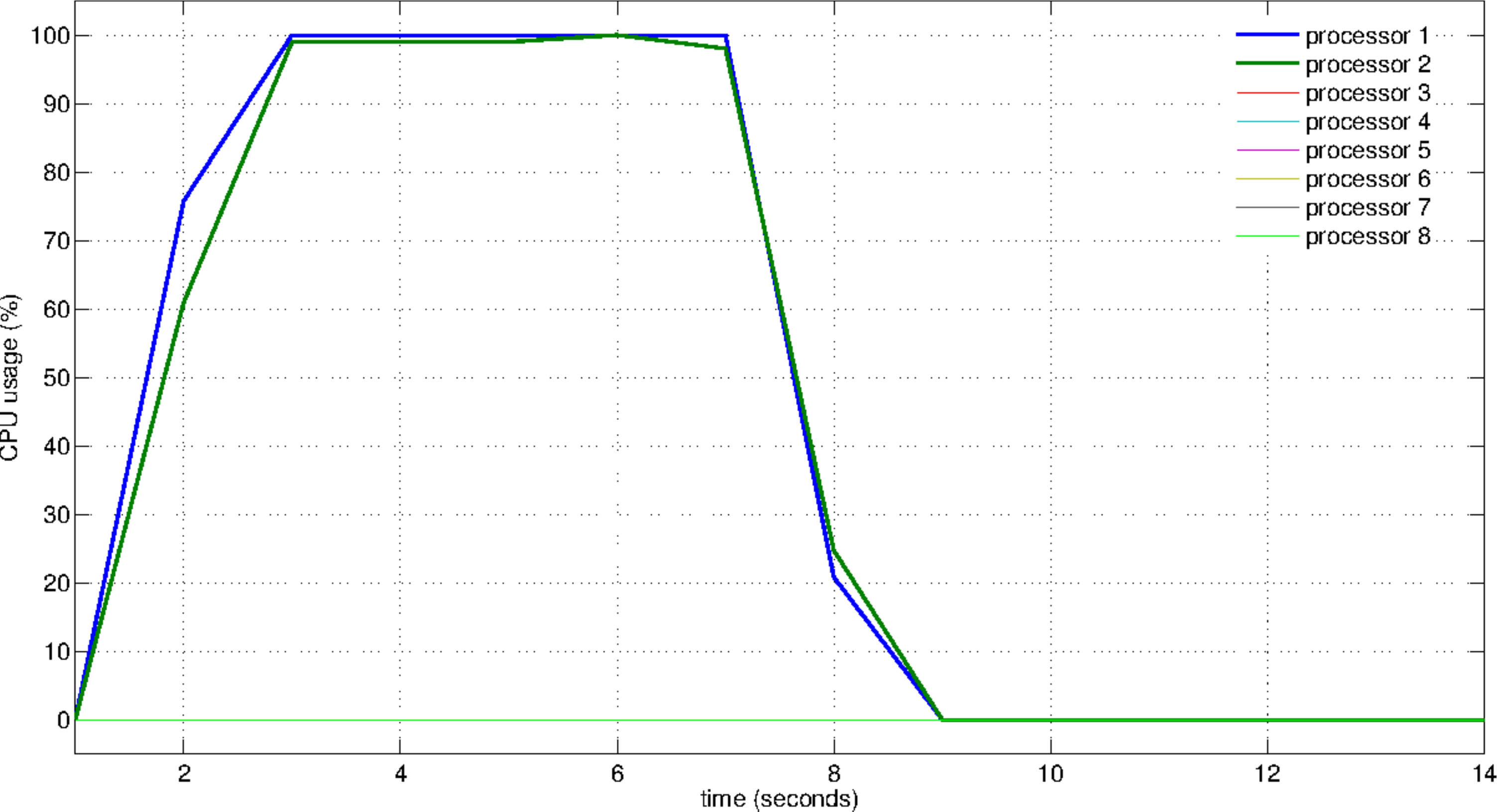}}
}
\subfigure[$p=8$]{
\scalebox{0.38}{\includegraphics{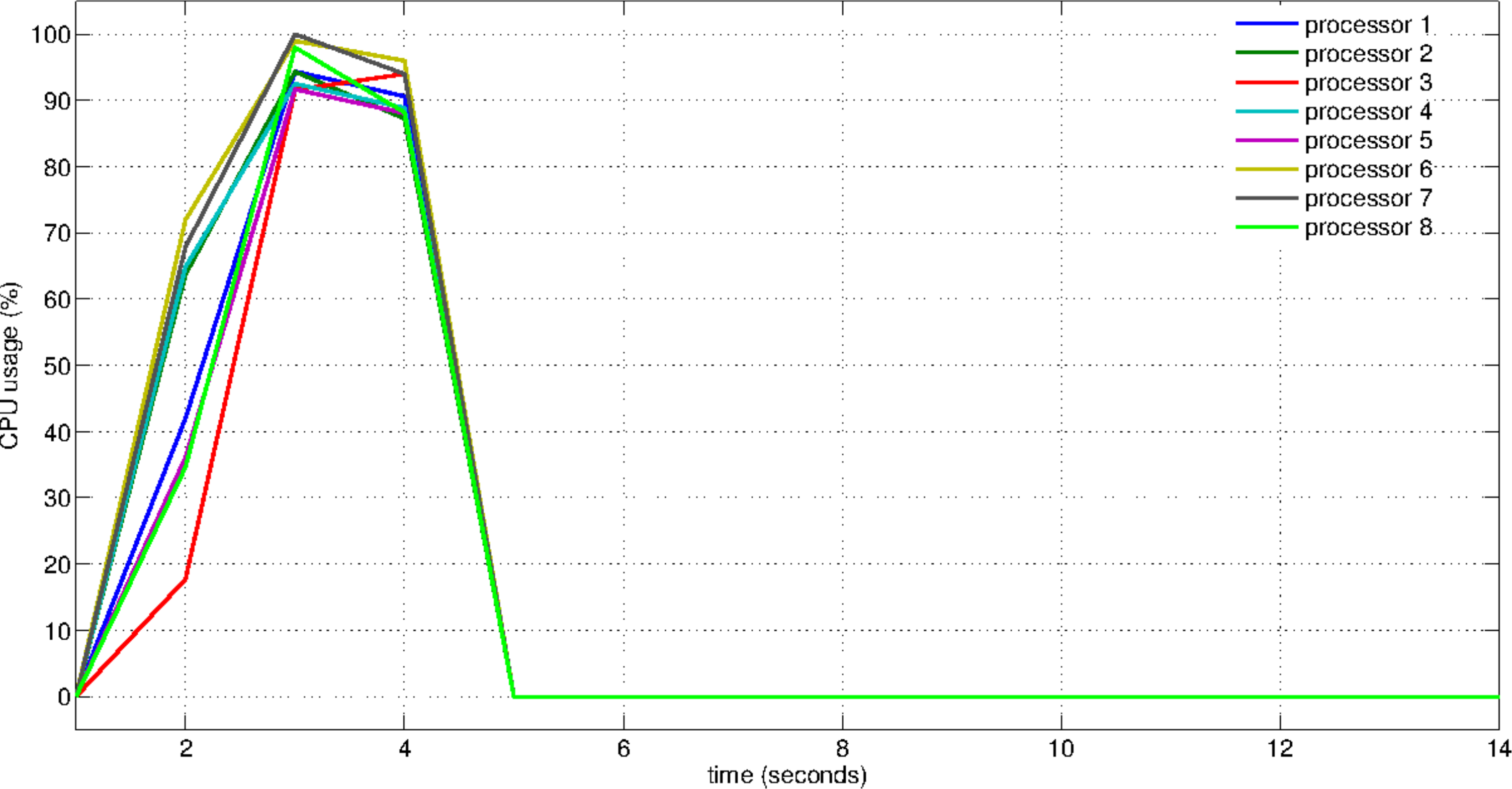}}
}
\caption{CPU usage (per second) during the solution processes with 1,2, and 8 threads}
\label{fig:cpu}
\end{figure}

\clearpage

\section{Conclusion and Future Research.} 
\label{sec:conc}



The current study is a part of our research efforts that aim at
harnessing parallel processing power for solving optimization
problems. Our main motivation here was to present the design process
that we went through to come up with an alternate globalization
strategy for unconstrained optimization that can be implemented on a
shared memory parallel environment.

The proposed algorithm works with a model function and considers its
optimization over an elliptical region. However, it does not employ a
conventional trust-region approach. Nor does it match with any one of
the well-known quasi-Newton updates. The basic idea is to use multiple
trial points for learning the function structure until a good point is
obtained.  These trials constitute the inner iterations. Fortunately,
all these computations of the algorithm are domain-separable with two
$O(1)$ synchronization points at each inner iteration.

Our numerical experience verifies that the direction updates of the
resulting algorithm can reduce the total number of function
evaluations required, and it is scalable to a degree allowed by the
inner iterations with a balanced distribution of the workload among
parallel processors.  Like any parallel optimization method, the
parallel performance of the proposed algorithm can be compromised if
function evaluations are computationally intensive and unfit for
parallelization.

We only solved a set of examples with our algorithm. It is yet to be
seen whether the algorithm is apt for solving other large-scale
problems, especially those ones arising in different applications.  In
our implementation, we chose the initial trial point simply by using
the negative gradient step. One may try to integrate other,
potentially more powerful but still parallelizable, steps to the
algorithm.  An example could be using a truncated Newton step; perhaps
computed by a few iterations of the conjugate gradient on the initial
quadratic model. Then switching to the globalization strategy, as
described here, could adjust the direction and the length of the
initial step.  Finally, our observations on the relationship of the 
new strategy with the quasi-Newton update formulas can be studied
further in future research.

\clearpage

\clearpage

\appendix

\section{Omitted Proofs.} 
\label{app:proofs}

\lemmanum{\ref{lem:tr}}{
If constraint \eqref{const} holds at inner iteration $t+1$ of
iteration $k$, then both $\alpha_k^0(s)$ and $\alpha_k^t(s)$ are
nonnegative, and they satisfy $\alpha_k^0(s) + \alpha^t(s)=1$.
}
\begin{proof}
  We have
  \[
    s\tr s_k^t - (s_k^t)\tr s_k^t  = (s-s_k^t)\tr (-(s-s_k^t) + s)  = -\|s - s_k^t\|^2 + s \tr s -
                                   s\tr s_k^t.
  \]
  Since constraint \eqref{const} implies $s\tr s \leq s\tr
  s_k^t$, we obtain 
  \[
  0 \leq s\tr s_k^t \leq (s_k^t)\tr s_k^t.
  \]
  The result follows from the definitions of $\alpha^0(s)$ and
  $\alpha^t(s)$.
\end{proof}

\lemmanum{\ref{lem:fact1}}{ Let $f$ be a convex function and consider
  the inner iteration $t+1$ of iteration $k$ with $\|s_k^t\| \neq
  0$. If $g_k\tr s_k^t \leq 0$, then $\|s_k^{t+1}\| \neq 0$ and
\[
\alpha^*:=\arg\min_{0 \leq \alpha \leq 1} m_k^t (\alpha s_k^t) = \min\left\{
\frac{(y_k^t)\tr s_k^t - f_k^t + f_k}{2(y_k^t)\tr s_k^t}, 1 \right\}.
\]
If we additionally have $f_k^t > f_k$, then $\alpha^* < 1$.
}
\begin{proof}
By using relation \eqref{eqn:cvxmodgra}, we have
\[
\nabla m_k^t(0) = g_k + \frac{1}{\|s_k^t\|^2}(f_k^t - (g_k^t)\tr s_k^t -
f_k)s_k^t.
\]
Since $f$ is a convex function, we know that $f_k^t - (g_k^t)\tr s_k^t
- f_k \leq 0$.  If $f_k^t - (g_k^t)\tr s_k^t - f_k = 0$, then
$\|\nabla m_k^t(0)\| = \|g_k\| \neq 0$ because $x_k$ is not a
stationary point.  Now, consider the case $f_k^t - (g_k^t)\tr s_k^t -
f_k < 0$, and suppose for contradiction that $\|\nabla m_k^t(0)\| =
0$.  Then,
\[
s_k^t = \frac{\|s_k^t\|^2}{f_k - f_k^t + (g_k^t)\tr s_k^t}g_k.
\]
Multiplying both sides with $g_k$, we obtain
\[
g_k\tr s_k^t = \frac{\|s_k^t\|^2}{f_k - f_k^t + (g_k^t)\tr s_k^t}\|g_k\|^2.
\]
Since $g_k\tr s_k^t \leq 0$, we obtain a contradiction and hence
$\|\nabla m_k^t(0)\| \neq 0$. This shows that $\|s_k^{t+1}\| \neq
0$. Note that
\[
(s^t_k)\tr \nabla m_k^t(0) = g_k\tr s_k^t + f_k^t - (g_k^t)\tr s_k^t -
f_k  = f_k^t -f_k - (s^t_k)\tr (g_k^t-g_k) \leq 0.
\]
This implies that $s_k^t$ is a descent direction for $m_k^t(s)$ at
$s=0$. Then, it is easy to solve the one-dimensional convex
optimization problem to obtain
\[
\alpha^* = \arg\min_{0 \leq \alpha \leq 1} m_k^t (\alpha s_k^t) = \min\left\{
\frac{(y_k^t)\tr s_k^t - f_k^t + f_k}{2(y_k^t)\tr s_k^t}, 1 \right\}.
\]
Again by using relation \eqref{eqn:cvxmodgra}, we have
\[
 \nabla m_k^t(s_k^t) = g_k^t - \frac{1}{\|s_k^t\|^2}(f_k + {g_k}\tr s_k^t - f_k^t)s_k^t,
\]
which gives 
\[
-(s^t_k)\tr \nabla m_k^t(s^t_k) = -(g_k^t)\tr s_k^t + f_k +g_k\tr s_k^t
- f_k^t = f_k-f_k^t - (s^t_k)\tr (g_k^t-g_k).
\]
If $f_k^t < f_k$ along with $g_k\tr s_k^t\leq 0$, then 
\[
0 < f_k^t -f_k \leq (s^t_k)\tr (g_k^t-g_k)
\]
holds. Therefore, we have $-(s^t_k)\tr \nabla m_k^t(s^t_k) \leq 0$,
which implies that $(-s_k^t)$ is a descent direction for $m_k^t(s)$ at
$s=s_k^t$. In this case, the minimizer along $s_k^t$ should be in the
interior. Thus, $\alpha^* < 1$.
\end{proof}


\begin{thebibliography}{MTK03}

\bibitem[BRS88]{Byrd:1988}
R.H. Byrd, Schnabel R.B., and G.A. Shultz.
\newblock Parallel quasi-{N}ewton methods for unconstrained optimization.
\newblock {\em Mathematical Programming}, 42(1):273--306, 1988.

\bibitem[DW03]{Dennis:2003}
Jr.~J.E. Dennis and Z.~Wu.
\newblock Parallel continuous optimization.
\newblock In J.~Dongarra, I.~Foster, G.~Fox, W.~Gropp, K.~Kennedy, L.~Torczon,
  and A.~White, editors, {\em Sourcebook of parallel computing}, pages
  649--670. Morgan Kaufmann Publishers Inc., San Francisco, CA, USA, 2003.

\bibitem[GOL04]{Cute:2004}
N.~I.~M. Gould, D.~Orban, and Toint~P. L.
\newblock {CUTEr} (and {SifDec}), a constrained and unconstrained testing
  environment, revisited.
\newblock Technical Report TR/PA/01/04, CERFACS, 2004.

\bibitem[Han86]{Han:1986}
S.-P. Han.
\newblock Optimization by updated conjugate subspaces.
\newblock In D.~Griffiths and G.~Watson, editors, {\em Numerical Analysis},
  pages 82--97. Longman Scientific and Technical, 1986.

\bibitem[MTK03]{Migdalas:2003}
A.~Migdalas, G.~Toraldo, and V.~Kumar.
\newblock Nonlinear optimization and parallel computing.
\newblock {\em Parallel Computing}, 29:375--391, 2003.

\bibitem[NS89]{Nash:1989}
S.G. Nash and A.~Sofer.
\newblock Block truncated-{N}ewton methods for parallel optimization.
\newblock {\em Mathematical Programming}, 45:529--546, 1989.

\bibitem[NW06]{Nocedal:2006}
J.~Nocedal and S.~J. Wright.
\newblock {\em Numerical Optimization}.
\newblock Springer, 2006.

\bibitem[Oka10]{lbfgs:package}
N.~Okazaki.
\newblock C port of the {L-BFGS} method written by jorge nocedal, 2010.
\newblock \url{http://www.chokkan.org/software/liblbfgs/}.

\bibitem[Pat84]{Patel:1984}
K.D. Patel.
\newblock Parallel computation and numerical optimisation.
\newblock {\em Annals of Operations Research}, 1:135--149, 1984.

\bibitem[PFZ98]{Phua:1998}
P.K.-H. Pkua, W.~Fan, and Y.~Zeng.
\newblock Parallel algorithms for large-scale nonlinear optimization.
\newblock {\em International Transactions in Operations Research}, 5(1):67--77,
  1998.

\bibitem[vL85]{Laarhoven:1985}
P.J.M. van Laarhoven.
\newblock Parallel variable metric algortihms for unconstrained optimization.
\newblock {\em Mathematical Programming}, 33:68--81, 1985.

\bibitem[Zen89]{Zenios:1989}
S.A. Zenios.
\newblock Parallel numerical optimization: current status and an annotated
  bibliography.
\newblock {\em {ORSA} Journal on Computing}, 1(1):20--43, 1989.

\end{thebibliography}
\end{document}